\newtheorem{theorem}{Theorem}
\newtheorem{cor}[theorem]{Corollary}
\newtheorem{lem}[theorem]{Lemma}
\newtheorem{Exa}[theorem]{Example}
\newenvironment{proof}[1][Proof]{\textbf{#1.} }{\ \rule{0.5em}{0.5em}}
\title{Singularities in Space-time Foam Algebras}
\author{Jan Harm van der Walt\\
Department of Mathematics and Applied Mathematics\\
University of Pretoria\\
Pretoria 0002}
\date{}
\begin{document}

\maketitle

\begin{abstract}
\noindent In this paper we consider the structure of the
singularity sets associated with generalized functions in certain
space-time foam algebras of generalized functions.  In particular,
we consider the algebra that is defined in terms of an asymptotic
vanishing condition outside sets of first Baire category.  It is
shown that this algebra is in fact isomorphic to the earlier
closed nowhere dense algebras.  We also discuss general questions
regarding singularities that can be handled in space-time foam
algebras.
\end{abstract}
\vspace{0.2cm}

\vspace{0.2cm}

Keywords:~~ Generalized Functions; Space-time Foam Algebras;
Singularities\vspace{0.2cm}

2010 Mathematics Subject Classification:~~ 46F30; 35Dxx

\section{Introduction}

It is well known that the linear theory of distributions is not
suited to a comprehensive and systematic treatment of nonlinear
problems. This inability of $\mathcal{D}'$ distributions to
accommodate nonlinear phenomena is exemplified by the so called
Schwarz Impossibility \cite{Schwarz 1}.  One formulation of
Schwarz's result is as follows:  There is no symmetric bilinear
mapping
\begin{eqnarray}
\star:\mathcal{D}'\left(\Omega\right) \times \mathcal{D}'
\left(\Omega\right) \ni \left(S,T\right) \rightarrow S\star T\in
\mathcal{D}'\left(\Omega\right)\nonumber
\end{eqnarray}
so that, $S\star T$ is the usual pointwise product of continuous
functions, when $S,T\in\mathcal{C}^{0}\left(\Omega\right)$. That
is, $\mathcal{D}'\left(\Omega\right)$ is not closed under any
multiplication that extends the multiplication of continuous
functions.

One way in which one may overcome the mentioned impossibility is
to embed $\mathcal{D}'\left(\Omega\right)$ as a vector subspace in
$\mathcal{A}\left(\Omega\right)$, where
$\mathcal{A}\left(\Omega\right)$ is a quotient algebra
\begin{eqnarray}
\mathcal{A}\left(\Omega\right) =
\mathcal{S}/\mathcal{I},\label{QAlgebra}
\end{eqnarray}
with $\mathcal{S}$ a subalgebra in $\mathcal{C}^{\infty}\left(
\Omega\right)^{\Lambda}$, for some index set $\Lambda$, and
$\mathcal{I}$ an ideal in $\mathcal{S}$.  This approach was
initiated by Rosinger \cite{Rosinger 4, Rosinger 5}, and developed
further in \cite{Rosinger 6,Rosinger 7,Rosinger 8,Rosinger 9}.  A
important particular case of the theory was introduced
independently by Colombeau \cite{ColombeauI, ColombeauII}.  This
version of the theory has seen rapid development and a variety of
applications over the past three decades, see for instance
\cite{Adamczewski, Bernard et al,Colombeau and Heibig,Colombeau et
al,Grosser et al I,Obergugenberger}.  One of the major advantages
of the (full) Colombeau algebra $\mathcal{G}\left(\Omega\right)$
is the rather natural close connection with distributions.  In
particular, $\mathcal{G}\left(\Omega\right)$ allows a {\it
canonical} linear embedding of the space of distributions.
Furthermore, the partial derivatives in
$\mathcal{G}\left(\Omega\right)$ coincide with the distributional
derivatives, when restricted to $\mathcal{D}'\left(\Omega\right)$.

Despite the mentioned utility of $\mathcal{G}\left(\Omega\right)$,
several deep results obtained within the more general version of
the theory appear to have no counterpart in the Colombeau setting.
In this regard, we may recall the global version of the
Cauchy-Kovalevskaia Theorem \cite{Rosinger 8,Rosinger 9}, obtained
in the so called closed nowhere dense algebras.  In
contradistinction with the closed nowhere dense algebra, in
$\mathcal{G}\left(\Omega\right)$ one cannot formulate, let alone
prove, such a global existence result \cite[Section 3.1]{Rosinger
14}. Indeed, due to the polynomial type growth conditions imposed
on generalized functions in $\mathcal{G}\left( \Omega\right)$ near
singularities, the class of nonlinear operations that can be
defined on $\mathcal{G}\left(\Omega\right)$, and the types of
singularities that generalized functions in
$\mathcal{G}\left(\Omega\right)$ can handle, are severely limited.
In particular, $\mathcal{G}\left(\Omega\right)$ fails to be a {\it
flabby sheaf}.

As mentioned, due to the failure of
$\mathcal{G}\left(\Omega\right)$ to be a flabby sheaf, the class
of singularities that this algebra can deal with is rather
limited.  Furthermore, the definition of Colombeau algebras on
manifolds are severely complicated by this failure, see for
instance \cite{Grosser et al I}.

Recently, Rosinger \cite{Rosinger 12,Rosinger 13,Rosinger 14}
introduced a general class of differential algebras, namely, the
space-time foam (STF) algebras, which include the earlier nowhere
dense algebras as a particular case.  Each such algebra admits a
linear embedding of $\mathcal{D}'\left( \Omega\right)$, and
contains $\mathcal{C}^{\infty}\left(\Omega \right)$ as a
subalgebra.  However, the embedding of distributions into STF
algebras is in general not canonical.  In fact, for any such
algebra, there may be infinitely many linear embeddings of
$\mathcal{D}'\left(\Omega\right)$, none of which are to be
preferred above the others.  Thus, as far as the embedding of
distributions in to differential algebras are concerned, the
Colombeau algebra appears a more natural setting than the STF
algebras.

Here we may note that a large class of algebras admitting a
canonical linear embedding of distributions, which commutes with
distributional derivatives, were recently introduced by Vernaeve
\cite{Vernaeve}.  These algebras also admits a global version of
the Cauchy-Kovalevskaia Theorem.

On the other hand, the STF algebras are able to handle a far
larger class of singularities than the Colombeau algebras, and for
that matter, any other differential algebra introduced so far.
Indeed, STF algebras can deal with singularities that occur on any
set $\Sigma$, provided only that $\Omega\setminus\Sigma$ is dense.
Also, STF algebras form a fine and flabby sheaf.  The mentioned
flabby sheaf structure of STF algebras results in a simple
construction of generalized functions on manifolds \cite{Rosinger
13}, although this fact is often not fully appreciated.

The utility of the STF algebras is further demonstrated by the
Global version of the Cauchy-Kovalevskaia Theorem, which holds in
suitable STF algebras \cite{Rosinger 14}, which is a strengthening
of the previously known global existence result for analytic PDEs
\cite{Rosinger 9}.  Besides this global existence result for large
classes of nonlinear PDEs, STF algebras have seen applications in
a variety of fields, including abstract differential geometry, de
Rham cohomology and quantum gravity, see for instance
\cite{Mallios and Rosinger}.

In this paper we investigate the structure of singularities in
certain STF algebras.  In particular, the Baire I algebras
$\mathcal{B}_{L,B-I}\left(\Omega\right)$, where $L$ is an
appropriate index set, are investigated, and compared with the
nowhere dense algebra.  In this regard, we show that these two
algebras are, in many cases, identical.  We further investigate
the extent to which large classes of singularities may be
accommodated in STF algebras.

The paper is organized as follows.  In Section 2 we consider the
relation between the Baire I algebras and chains of closed nowhere
dense algebras.  In particular, Section 2.1 recalls the basic
construction of nowhere dense algebras, while Section 2.2 is
concerned with the more general STF algebras.  Finally, it is
shown that the Baire I algebra is in fact, in many cases,
identical with the nowhere dense algebra.  A more general
discussion of singularities in STF algebras is presented in
Section 2.5 and 2.6.

\section{Baire I Algebras vs Closed Nowhere Dense Algebras}

In this section we discuss certain STF algebras, recently
introduced by Rosinger \cite{Rosinger 12,Rosinger 13,Rosinger 14}.
In particular, we consider the algebra $\mathcal{B}_{L,B-I}
\left(\Omega\right)$, which is defined in terms of a dense
vanishing condition off sets of first Baire category.  These
algebras, as part of the larger family of STF algebras, appear to
have a rather clear and natural structure. Indeed, as mentioned,
STF algebras form flabby sheaves, hence the following two usefula
nd remarkable properties of such algebras:
\begin{description}
    \item[(a)] STF algebras can handle large classes of
    singularities
    \item[(b)] STF algebras are defined on general smooth
    manifolds in a straight forward and simple way.
\end{description}
Furthermore, certain STF algebras, including as a particular case
$\mathcal{B}_{L,B-I}\left(\Omega\right)$, admit a global version
of the Cauchy-Kovalevskaia Theorem.

We show that, for a large class of index sets $L$, the algebra
$\mathcal{B}_{L,B-I}\left( \Omega \right)$ is in fact isomorphic
to the nowhere dense algebra
$\mathcal{A}^{\infty}_{L,nd}\left(\Omega\right)$, see
\cite{Rosinger 7,Rosinger 8,Rosinger 9}.  This follows from a
generalization of a suitable Baire category argument \cite[Chapter
3, Appendix 1]{Rosinger 9}

\subsection{Nowhere Dense Algebras}

The closed nowhere dense algebras were first introduced by
Rosinger \cite{Rosinger 7}, and are designed in such a way as to
accommodate certain singular generalized solutions of PDEs, such
as shock waves, as actual solutions of the respective equations in
a differential-algebraic sense.  Here we briefly recall the
construction of these algebras, and some of their basic features.

In this regard, let $\Omega\subseteq \mathbb{R}^{n}$ be a non
void, open set.  Let $L=\left(\Lambda,\leq\right)$ be an infinite
right directed index set.  That is,
\begin{eqnarray}
\begin{array}{ll}
\forall & \lambda,\mu\in\Lambda~: \\
\exists & \nu\in\Lambda~: \\
& \lambda,\mu\leq \nu \\
\end{array}.\nonumber
\end{eqnarray}
With respect to the usual componentwise operations,
$\mathcal{C}^{\infty}\left(\Omega\right)^{\Lambda}$ is a unital
and commutative algebra, and the set
\begin{eqnarray}
\mathcal{I}^{\infty}_{L,nd}\left(\Omega\right) = \left\{
w=\left(w_{\lambda}\right)_{\lambda\in\Lambda}~\begin{array}{|ll}
\exists & \Gamma\subset\Omega\mbox{ closed nowhere dense}~: \\
\forall & x\in\Omega\setminus\Gamma~: \\
\exists & \lambda\in\Lambda~: \\
\forall & \mu\in\Lambda,~\mu\geq \lambda~: \\
& w_{\mu}\left(x\right)=0 \\
\end{array} \right\}\label{IndkDef}
\end{eqnarray}
is an ideal in
$\mathcal{C}^{\infty}\left(\Omega\right)^{\Lambda}$. Based on a
Baire category argument in $\mathbb{R}^{n}$, see \cite[Chapter 2,
Appendix 1]{Rosinger 9} the condition in the definition of
$\mathcal{I}^{\infty}_{L,nd}\left(\Omega\right)$ is equivalent to
\begin{eqnarray}
\begin{array}{ll}
\exists & \Gamma\subset\Omega\mbox{ closed nowhere dense}~: \\
\forall & x\in\Omega\setminus\Gamma~: \\
\exists & \lambda\in\mathbb{N},~ V\subseteq \Omega\setminus\Gamma~\mbox{a neighborhood of }x~: \\
\forall & y\in V,~\mu\in\Lambda,~\mu\geq \lambda~: \\
& w_{\mu}\left(y\right)=0 \\
\end{array},\label{IndkEqDefI}
\end{eqnarray}
which is easily seen to be equivalent to
\begin{eqnarray}
\begin{array}{ll}
\exists & \Gamma\subset\Omega\mbox{ closed nowhere dense}~: \\
\forall & x\in\Omega\setminus\Gamma~: \\
\exists & \lambda\in\mathbb{N},~ V\subseteq \Omega\setminus\Gamma~\mbox{a neighborhood of }x~: \\
\forall & y\in V,~p\in\mathbb{N}^{n},~\mu\in\Lambda,~\mu\geq \lambda~: \\
& D^{p}w_{\mu}\left(y\right)=0 \\
\end{array}.\label{IndkEqDefII}
\end{eqnarray}
In view of (\ref{IndkEqDefII}), it follows immediately that the
differential operators
\begin{eqnarray}
D^{p}:\mathcal{C}^{\infty}\left(\Omega\right)^{\Lambda}\ni
w=\left(w_{n}\right)\mapsto D^{p}w=\left(D^{p}w_{n}\right)\in
\mathcal{C}^{\infty}\left(\Omega\right)^{\Lambda}\nonumber
\end{eqnarray}
satisfy the inclusion
\begin{eqnarray}
D^{p}\left(\mathcal{I}^{\infty}_{L,nd}\left(\Omega\right)\right)\subseteq
\mathcal{I}^{\infty}_{L,nd}\left(\Omega\right),~p\in\mathbb{N}^{n}.
\end{eqnarray}
Thus, the usual partial derivative operators on
$\mathcal{C}^{\infty}\left(\Omega\right)$ extend to mappings
\begin{eqnarray}
D^{p}:\mathcal{A}^{\infty}_{L,nd}\left(\Omega\right)\ni w +
\mathcal{I}^{\infty}_{L,nd}\left(\Omega\right) \mapsto D^{p}w+
\mathcal{I}^{\infty}_{L,nd}\left(\Omega\right) \in
\mathcal{A}^{\infty}_{L,nd}\left(\Omega\right),~p\in\mathbb{N}^{n},\label{DpAndkDef}
\end{eqnarray}
where $\mathcal{A}^{\infty}_{L,nd}\left(\Omega\right)$ is the
quotient algebra
\begin{eqnarray}
\mathcal{A}^{\infty}_{L,nd}\left(\Omega\right) =
\mathcal{C}^{\infty} \left(
\Omega\right)^{\mathbb{N}}/\mathcal{I}_{L,nd}^{\infty}\left(\Omega
\right).\nonumber
\end{eqnarray}

In view of (\ref{IndkEqDefI}) it is clear that the ideal
$\mathcal{I}^{\infty}_{L,nd}\left(\Omega\right)$ satisfies the
{\it off diagonal} or {\it neutrix condition}
\begin{eqnarray}
\mathcal{I}^{\infty}_{L,nd}\left(\Omega\right) \cap
\mathcal{U}^{\infty}_{\Lambda}\left(\Omega\right)=\{0\},\label{Neutrix}
\end{eqnarray}
where $\mathcal{U}^{\infty}_{\Lambda}\left(\Omega\right) = \left\{
u\left(\psi\right)
=\left(\psi_{\lambda}\right)_{\lambda\in\Lambda}~|~\psi_{n}=\psi,~\lambda\in\Lambda
\right\}$ is the diagonal in
$\mathcal{C}^{\infty}\left(\Omega\right)^{\Lambda}$.  As such, see
\cite[Chapter 6]{Rosinger 9}, the algebras
$\mathcal{A}^{\infty}_{nd} \left(\Omega\right)$ contains
$\mathcal{C}^{\infty}\left(\Omega\right)$ as a subalgebra.  That
is, there exists a canonical, injective algebra homomorphism
\begin{eqnarray}
\mathcal{C}^{\infty}\left(\Omega\right)\hookrightarrow
\mathcal{A}^{\infty}_{L,nd}\left(\Omega\right)\label{CInftyEmbedding}
\end{eqnarray}
In particular, the embedding (\ref{CInftyEmbedding}) is an {\it
embedding of differential algebras}.  That is, for all
$p\in\mathbb{N}^{n}$, the diagram
\begin{eqnarray}
\setlength{\unitlength}{1cm} \thicklines
\begin{array}{l}
\begin{picture}(14,3.5)

\put(3.5,2.5){$\mathcal{C}^{\infty}\left(\Omega\right)$}
\put(4.7,2.6){\vector(1,0){5}}
\put(9.9,2.5){$\mathcal{C}^{\infty}\left(\Omega\right)$}
\put(6.9,2.8){$D^{p}$}

\put(3.9,2.35){\vector(0,-1){1.45}}
\put(10.3,2.35){\vector(0,-1){1.45}}
\put(3.5,1.55){$\hookrightarrow$}
\put(10.4,1.55){$\hookrightarrow$}

\put(3.2,0.5){$\mathcal{A}^{\infty}_{L,nd}\left(\Omega\right)$}
\put(4.7,0.6){\vector(1,0){4.9}}
\put(9.7,0.5){$\mathcal{A}^{\infty}_{L,nd}\left(\Omega\right)$}
\put(6.9,0.8){$D^{p}$}

\end{picture}
\end{array}\label{CInftyDAEmbedding}
\end{eqnarray}
commutes.  The embedding (\ref{CInftyEmbedding}) preserves not
only the algebraic structure of
$\mathcal{C}^{\infty}\left(\Omega\right)$, but also its
differential structure.

The neutrix condition (\ref{Neutrix}) implies also the existence
of an injective, linear mapping
\begin{eqnarray}
\mathcal{D}'\left(\Omega\right)\hookrightarrow
\mathcal{A}^{\infty}
_{L,nd}\left(\Omega\right).\label{DistEmbedding}
\end{eqnarray}
That is, the differential algebra $\mathcal{A}^{\infty}
_{L,nd}\left(\Omega\right)$ contains the distributions as a linear
subspace, see \cite[pp. 234--244]{Rosinger 9} where those algebras
that admit linear embeddings of distributions are characterized in
terms such off diagonal conditions. However, in contradistinction
with (\ref{CInftyEmbedding}), the embedding (\ref{DistEmbedding})
does not commute with partial derivatives.  In other words, the
differential operators on
$\mathcal{A}^{\infty}_{L,nd}\left(\Omega\right)$ do not, in
general, coincide with distributional derivatives, when restricted
to $\mathcal{D}'\left(\Omega\right)$.

\subsection{STF Algebras}

Proceeding from the particular to the general, let us now recall
the construction of the large class of differential algebras,
namely, the STF algebras, first introduced in \cite{Rosinger 12,
Rosinger 13, Rosinger 14}.  In this regard, let $\mathcal{S}$ be a
collection of subsets of $\Omega$ that satisfies the conditions
\begin{eqnarray}
\begin{array}{ll}
\forall & \Sigma\in \mathcal{S}~: \\
& \Omega\setminus\Sigma~{\rm is~dense~in}~\Omega \\
\end{array}\label{DenseComplement}
\end{eqnarray}
and
\begin{eqnarray}
\begin{array}{ll}
\forall & \Sigma,~\Sigma'\in\mathcal{S}~: \\
\exists & \Sigma''\in\mathcal{S}~: \\
& \Sigma\cup\Sigma'\subseteq \Sigma'' \\
\end{array}\label{UnionProp}
\end{eqnarray}
For each singularity set $\Sigma\in\mathcal{S}$ we denote by
$\mathcal{J}_{L,\Sigma}\left(\Omega\right)$ the ideal in
$\mathcal{C}^{\infty}\left(\Omega\right)^{\Lambda}$ of all
$\Lambda$-sequences $w=\left(w_{\lambda}\right)_{\lambda\in
\Lambda}$ that, outside the singularity set $\Sigma$ will satisfy
the {\it asymptotic vanishing} condition
\begin{eqnarray}
\begin{array}{ll}
\forall & x\in\Omega\setminus\Sigma~: \\
\exists & \lambda\in\Lambda~: \\
\forall & p\in\mathbb{N}^{n},~\mu\in\Lambda,~\mu\geq\lambda~: \\
& D^{p}w_{\mu}\left(x\right)=0 \\
\end{array}.\label{AssVanish}
\end{eqnarray}
It is easily seen that
\begin{eqnarray}
\Sigma\subseteq\Sigma'\Rightarrow
\mathcal{J}_{L,\Sigma}\left(\Omega\right) \subseteq
\mathcal{J}_{L,\Sigma'}\left(\Omega\right)\nonumber
\end{eqnarray}
so that it follows by (\ref{UnionProp}) that
\begin{eqnarray}
\mathcal{J}_{L,\mathcal{S}}\left(\Omega\right) =
\bigcup_{\Sigma\in\mathcal{S}}\mathcal{J}_{L,\Sigma}\left(\Omega\right)\label{IdealDef}
\end{eqnarray}
is also an ideal in $\mathcal{C}^{\infty}\left(\Omega\right)^{
\Lambda}$.  The {\it space-time foam algebra} associated with
$\mathcal{S}$ and $L$ is now defined as
\begin{eqnarray}
\mathcal{B}_{L,\mathcal{S}}\left(\Omega\right) =
\mathcal{C}^{\infty}\left(\Omega\right)^{\Lambda}/
\mathcal{J}_{L,\mathcal{S}}\left(\Omega\right)\label{STFAlgDef}
\end{eqnarray}
It is clear that the ideal
$\mathcal{J}_{L,\mathcal{S}}\left(\Omega\right)$ satisfies the off
diagonal condition
\begin{eqnarray}
\mathcal{J}_{L,\mathcal{S}}\left(\Omega\right)\cap
\mathcal{U}^{\infty}_{\Lambda}\left(\Omega\right)=\{0\},\nonumber
\end{eqnarray}
as well as the inclusion
\begin{eqnarray}
D^{p}\left(\mathcal{J}_{L,\mathcal{S}}\left(\Omega\right)\right)
\subseteq \mathcal{J}_{L,\mathcal{S}}\left(\Omega\right),~
p\in\mathbb{N}.\nonumber
\end{eqnarray}
Consequently, $\mathcal{B}_{L,\mathcal{S}}\left(\Omega\right)$
contains $\mathcal{C}^{\infty}\left(\Omega\right)$ as a
subalgebra, and the differential operators on
$\mathcal{C}^{\infty}\left(\Omega\right)$ extend to
$\mathcal{B}_{L,\mathcal{S}}\left(\Omega\right)$.  Furthermore, as
in the case of the nowhere dense algebras,
$\mathcal{B}_{L,\mathcal{S}}\left(\Omega\right)$ contains
$\mathcal{D}'\left(\Omega\right)$ as a linear subspace.

The nowhere dense algebras $\mathcal{A}^{\infty}_{L,nd}\left(
\Omega\right)$ is clearly but a particular case of the STF
algebras. Indeed, if the family of singularity sets $\mathcal{S}$
is chosen to be the collection of all closed nowhere dense subsets
of $\Omega$, the resulting STF algebra is precisely the algebra
$\mathcal{A}^{\infty}_{L,nd}\left( \Omega\right)$.

The nowhere dense algebras $\mathcal{A}^{\infty}_{L,nd}
\left(\Omega\right)$ allow singularities which may occur on
arbitrary closed nowhere dense sets.  As such, these singularity
sets are far lager than those that can be dealt with, say, though
distributions.  In particular, such a singularity set my have
arbitrary large positive Lebesgue measure \cite{Oxtoby}.  However,
general STF algebras are able to handle singularities on even
bigger sets.  Indeed, due to the requirement
(\ref{DenseComplement}), any set $\Sigma\subset \Omega$ may act as
the singularity set of a generalized function, provided only that
its complement $\Omega\setminus\Sigma$ is dense in $\Omega$. In
this way, the cardinality of the singularity set of a generalized
function may turn out to be greater than that of the set of its
regular points.

\subsection{Baire I Algebras}

Another important particular case of the SFT construction, given
in Section 2.2, is the so called Baire I algebras
$\mathcal{B}_{L,B-I}\left( \Omega \right)$.  In this case, the
family $\mathcal{S}_{B-I}$ of singularity sets is chosen as
\begin{eqnarray}
\mathcal{S}_{B-I} = \left\{ \Sigma\subset \Omega ~:~
\Sigma~\mbox{is first Baire category}\right\}.\nonumber
\end{eqnarray}
Then the ideal $\mathcal{J}_{L,\mathcal{S}_{B-I}}
\left(\Omega\right) = \mathcal{J}_{L,B-I}\left( \Omega\right)$
consists of those $\Lambda$-sequences of smooth functions
$w=(w_{\lambda})_{\lambda\in\Lambda}$ that satisfy
\begin{eqnarray}
\begin{array}{ll}
\exists & \Sigma\subset\Omega~\mbox{of first Baire category}~: \\
\forall & x\in\Omega\setminus\Sigma~: \\
\exists & \lambda\in\Lambda~: \\
\forall & p\in\mathbb{N}^{n},~\mu\in\Lambda,~\mu\geq \lambda~: \\
& w_{\mu}(x)=0 \\
\end{array}\label{BIIdeal}
\end{eqnarray}
Since each closed nowhere dense set $\Gamma\subseteq \Omega$ is of
first Baire category, it is clear that the inclusion
\begin{eqnarray}
\mathcal{I}^{\infty}_{L,nd}\left(\Omega\right) \subseteq
\mathcal{J}_{L,B-I}\left( \Omega\right)\label{InftyIBIInclusion}
\end{eqnarray}
holds.  Consequently, the differential algebras
$\mathcal{B}_{L,B-I}\left( \Omega \right)$ and
$\mathcal{A}^{\infty}_{L,nd}\left( \Omega \right)$ are related
though the existence of a canonical, surjective algebra
homomorphism
\begin{eqnarray}
\mathcal{A}^{\infty}_{L,nd}\left( \Omega \right)\rightarrow
\mathcal{B}_{L,B-I}\left( \Omega \right).\label{AInftyBIHom}
\end{eqnarray}
In view of the existence of such an algebra homomorphism, the
generalized functions in $\mathcal{B}_{L,B-I}\left( \Omega
\right)$ may be considered to be more regular than those in
$\mathcal{A}^{\infty}_{L,nd}\left( \Omega \right)$, see
\cite[Sect. 1.7]{Rosinger 14} and \cite{vdWalt}.

\subsection{Baire I Algebras are Nowhere Dense Algebras}

As mentioned, the Baire I algebras are related to the nowhere
dense algebras through the existence of a surjective algebra
homomorphism (\ref{AInftyBIHom}).  We now proceed to show that,
for a large class of index sets $L$, this algebra homomorphism is
in fact an isomorphism.
\begin{lem}\label{Lemma1}
Suppose that $X$ is a Baire space, and $Y$ any topological space.
Assume that the sequence $\left(w_{\lambda}\right)\subset
\mathcal{C}\left(X,Y\right)$ and the continuous function
$w:X\rightarrow Y$ satisfy
\begin{eqnarray}
\begin{array}{ll}
\forall & x\in X~: \\
\exists & \lambda\in\mathbb{N}~: \\
\forall & \mu\in\mathbb{N},~\mu\geq \lambda~: \\
& w_{\mu}(x)=w(x) \\
\end{array}\nonumber
\end{eqnarray}
Then for every non void, open subset $A$ of $X$, there exists a
non void, relatively open subset $U$ of the closure $\overline{A}$
of $A$, and $\nu\in\mathbb{N}$ such that
\begin{eqnarray}
\begin{array}{ll}
\forall & x\in U~: \\
\forall & \lambda\in\mathbb{N},~\lambda\geq\nu~: \\
& w_{\lambda}(x)=w(x) \\
\end{array}\nonumber
\end{eqnarray}
\end{lem}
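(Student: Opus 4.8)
The plan is to exploit the Baire category structure of $X$ by partitioning $A$ according to the ``stabilization time'' of the sequence. For each $\nu\in\mathbb{N}$, set
\begin{eqnarray}
F_{\nu}=\left\{x\in\overline{A}~:~\forall\,\lambda\geq\nu~:~w_{\lambda}(x)=w(x)\right\}.\nonumber
\end{eqnarray}
First I would check that each $F_{\nu}$ is closed in $\overline{A}$: it is an intersection over $\lambda\geq\nu$ of the sets $\{x\in\overline{A}:w_{\lambda}(x)=w(x)\}$, and each of these is closed because $w_{\lambda}$ and $w$ are continuous into $Y$ and agree on a set which is the preimage of the diagonal under the continuous map $x\mapsto(w_{\lambda}(x),w(x))$ --- here I should be slightly careful, since the diagonal in $Y\times Y$ need not be closed unless $Y$ is Hausdorff. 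To avoid assuming $Y$ Hausdorff, I would instead argue directly with the original open set $A$ replaced by a reduction, or simply note that the hypothesis only supplies pointwise equality; the cleanest route is to observe that $\{x:w_\lambda(x)=w(x)\}$ need not be closed, so I should work with the closure of $A$ more cleverly. The standard fix (as in Rosinger's argument) is: it suffices to find the conclusion, and one may as well assume $Y$ carries enough separation, OR one restricts attention to the statement as an equality of functions which, on an open set, can be tested. I will proceed assuming the sets $F_\nu$ relative to $\overline{A}$ are closed, which holds in the intended application where $Y=\mathbb{R}$ or $\mathbb{C}$.

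The key step is then the Baire category argument. By the pointwise stabilization hypothesis, every $x\in X$ lies in some $F_\nu$, so in particular $\overline{A}=\bigcup_{\nu\in\mathbb{N}}F_{\nu}$. Now $\overline{A}$, being a closed subset of the Baire space $X$, is itself a Baire space. Hence the countable union $\bigcup_\nu F_\nu$ cannot have all terms nowhere dense in $\overline{A}$: there exists some $\nu$ for which $F_{\nu}$ is not nowhere dense in $\overline{A}$. Since $F_\nu$ is closed in $\overline{A}$, being not nowhere dense means $F_\nu$ has nonempty interior relative to $\overline{A}$. Let $U$ be that relative interior; then $U$ is a non void, relatively open subset of $\overline{A}$, and by construction every $x\in U$ satisfies $w_\lambda(x)=w(x)$ for all $\lambda\geq\nu$. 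This is exactly the desired conclusion. Note $U$ need not be contained in $A$, which is why the statement is phrased in terms of $\overline{A}$ rather than $A$ itself --- this is the crucial feature that makes the argument go through even when $A$ itself might fail to have the requisite stabilization on an open piece.

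The main obstacle I anticipate is the closedness of the sets $F_\nu$ when $Y$ is an arbitrary topological space. If $Y$ is not Hausdorff, the coincidence set of two continuous maps need not be closed, and the Baire argument above would stall. I expect the resolution to be one of: (i) the intended applications have $Y$ metrizable (indeed $Y=\mathbb{R}$) so this is a non-issue and the generality is somewhat cosmetic; or (ii) one replaces $F_\nu$ by its closure $\overline{F_\nu}$ in $\overline{A}$, applies Baire to get a relatively open $U\subseteq\overline{F_\nu}$ for some $\nu$, and then must recover pointwise equality on $U$ --- which again needs that $\{w_\lambda=w\}$ be closed, so this does not actually escape the difficulty. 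The honest statement is that the lemma as written implicitly wants $Y$ to have the property that coincidence sets of continuous functions are closed (e.g.\ $Y$ Hausdorff, or at least $T_1$ with enough structure); under that mild hypothesis the proof is exactly the two-paragraph Baire argument above, and everything else is routine.
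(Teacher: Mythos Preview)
Your overall strategy is the same as the paper's: define the stabilization sets $F_\nu$ in $\overline{A}$, observe that they cover $\overline{A}$ and are closed, and apply a Baire category argument to find one with nonempty relative interior. However, there is a genuine gap in your justification that $\overline{A}$ is a Baire space. You assert that ``$\overline{A}$, being a closed subset of the Baire space $X$, is itself a Baire space,'' but this is false in general: closed subspaces of Baire spaces need not be Baire. What \emph{is} true is that open subspaces of Baire spaces are Baire, and the paper exploits precisely this. It argues directly: given relatively open dense sets $D_\lambda\subseteq\overline{A}$, one may assume $D_\lambda\subseteq A$ (since $A$ is open and dense in $\overline{A}$), then extend each to $D_\lambda'=(X\setminus\overline{A})\cup D_\lambda$, which is open and dense in $X$, and invoke the Baire property of $X$ to conclude that $\bigcap D_\lambda'$ meets $A$. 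Equivalently: $A$ is Baire as an open subspace of $X$, and $A$ is dense open in $\overline{A}$, whence $\overline{A}$ is Baire. The openness of $A$ is doing real work here, and your argument as written does not use it.

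On the Hausdorff issue you raise: your concern is legitimate, and in fact the paper glosses over the same point, simply asserting that continuity of the $w_\lambda$ and $w$ forces each $A_\mu$ to be closed. In the intended application $Y=\mathbb{R}$, so the matter is harmless; but for arbitrary $Y$ the lemma as stated tacitly requires that coincidence sets of continuous maps into $Y$ be closed (e.g.\ $Y$ Hausdorff), exactly as you diagnosed.
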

\begin{proof}
We claim that $\overline{A}$ is a Baire space.  In this regard,
for each $\lambda\in\mathbb{N}$, let $D_{\lambda}\subseteq
\overline{A}$ be relatively open and dense.  We may assume,
without loss of generality, that $D_{\lambda}\subseteq A$. Then,
for each $\lambda\in\mathbb{N}$, the set
$D_{\lambda}'=\left(X\setminus \overline{A}\right)\cup
D_{\lambda}$ is open and dense in $X$. Since $X$ is a Baire space,
it follows that the set
\begin{eqnarray}
\bigcap_{\lambda\in\mathbb{N}}D_{\lambda}'\nonumber
\end{eqnarray}
is dense in $X$.  Since $A$ is a non void and open, it follows
that
\begin{eqnarray}
\bigcap_{\lambda\in\mathbb{N}}D_{\lambda}'=\overline{A}\bigcap
\left(\bigcap_{\lambda\in\mathbb{N}}D_{\lambda}'\right)\neq
\emptyset. \nonumber
\end{eqnarray}
Thus $\overline{A}$ is a Baire space.\\
For every $\mu\in\mathbb{N}$, let us denote by $I_{\lambda}$ the
set
\begin{eqnarray}
A_{\mu} = \left\{x\in \overline{A}~\begin{array}{|ll}
\forall & \lambda\in\mathbb{N},~\lambda\geq \mu \\
& w_{\lambda}(x)=w(x) \\
\end{array}\right\}\nonumber
\end{eqnarray}
We clearly have $\overline{A} =
\bigcup_{\mu\in\mathbb{N}}A_{\mu}$.  Since $w_{\lambda}$, with
$\lambda\in\mathbb{N}$, and $w$ are all continuous functions, it
follows that each of the sets $A_{\mu}$ is a closed set in
$\overline{A}$.  Since $\overline{A}$ is a Baire space, it follows
that at least one of the sets $A_{\mu}$ must have nonempty
interior, relative to $\overline{A}$. This completes the proof.
\end{proof}

We call the right directed set $L=\left(\Lambda,\leq\right)$ {\it
countably co-final} if there exists a countable set
$\{\lambda_{i}~:~i\in\mathbb{N}\}\subseteq \Lambda$ such that the
mapping
\begin{eqnarray}
\mathbb{N}\ni i\mapsto \lambda_{i}\in\Lambda\nonumber
\end{eqnarray}
is an order isomorphic embedding, that is,
\begin{eqnarray}
i\leq j \Leftrightarrow \lambda_{i}\leq\lambda_{j},\nonumber
\end{eqnarray}
and
\begin{eqnarray}
\begin{array}{ll}
\forall & \lambda\in\Lambda~: \\
\exists & i\in\mathbb{N}~: \\
& \lambda\leq \lambda_{i} \\
\end{array}\nonumber
\end{eqnarray}
\begin{theorem}\label{AndFBCEq}
Suppose that $L$ is countably co-final.  Then
$\mathcal{J}_{L,B-I}\left( \Omega\right) =
\mathcal{I}^{\infty}_{L,nd}\left( \Omega\right)$.
\end{theorem}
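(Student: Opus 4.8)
The plan is to prove the two inclusions. One of them, namely $\mathcal{I}^{\infty}_{L,nd}(\Omega) \subseteq \mathcal{J}_{L,B-I}(\Omega)$, has already been observed in (\ref{InftyIBIInclusion}), since every closed nowhere dense set is of first Baire category. So the entire content of the theorem is the reverse inclusion $\mathcal{J}_{L,B-I}(\Omega) \subseteq \mathcal{I}^{\infty}_{L,nd}(\Omega)$, and this is where the countable co-finality of $L$ and Lemma \ref{Lemma1} will be used.

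Let $w = (w_\lambda)_{\lambda\in\Lambda} \in \mathcal{J}_{L,B-I}(\Omega)$, so there is a set $\Sigma \subset \Omega$ of first Baire category such that the asymptotic vanishing condition (\ref{BIIdeal}) holds off $\Sigma$. Using that $L$ is countably co-final, fix a cofinal order-isomorphic copy $\{\lambda_i : i\in\mathbb{N}\}$ of $\mathbb{N}$ inside $\Lambda$; then the asymptotic vanishing over the directed set $\Lambda$ is equivalent to asymptotic vanishing along the sequence $(w_{\lambda_i})_{i\in\mathbb{N}}$, and I can pass to this countable subsequence, taking $w = 0$ as the limiting continuous function and $X = \Omega \setminus \Sigma$ or, better, working locally on $\Omega$ itself. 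Since $\Sigma$ is first category, write $\Sigma = \bigcup_{k} \Sigma_k$ with each $\Sigma_k$ nowhere dense, hence $\overline{\Sigma_k}$ closed nowhere dense; the plan is to build the single closed nowhere dense set $\Gamma$ witnessing membership in $\mathcal{I}^{\infty}_{L,nd}(\Omega)$ out of these pieces together with boundary sets produced by Lemma \ref{Lemma1}.

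The core of the argument is a Baire-category / exhaustion construction. Consider the open set $\Omega \setminus \bigcup_k \overline{\Sigma_k}$; it is dense in $\Omega$ (as a countable intersection of dense open sets, $\Omega$ being a Baire space). On this open set the hypothesis of Lemma \ref{Lemma1} is satisfied (with $w=0$) at every point. I would then iterate: starting from any nonvoid open $A_0 \subseteq \Omega$, Lemma \ref{Lemma1} yields a nonvoid relatively open $U \subseteq \overline{A_0}$ and an index $\nu$ on which $w_\lambda \equiv 0$ for $\lambda \geq \nu$; by shrinking, one gets a genuine open subset of $\Omega$ on which the tail vanishes. Exhausting $\Omega$ by a maximal (Zorn) family of such disjoint open "good" sets, the union $G$ of this family is open and dense in $\Omega$ — maximality plus Lemma \ref{Lemma1} forces the complement of $\overline{G}$ to be empty — so $\Gamma := \Omega \setminus G$ is closed nowhere dense, and on each piece of $G$ the sequence vanishes from some index on, which is exactly condition (\ref{IndkEqDefI}); combined with the equivalence (\ref{IndkEqDefI})$\Leftrightarrow$(\ref{IndkEqDefII}) recalled in Section 2.1, this places $w$ in $\mathcal{I}^{\infty}_{L,nd}(\Omega)$, after translating back from the sequence indexed by $\mathbb{N}$ to the directed set $\Lambda$ via cofinality.

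The main obstacle, and the step requiring the most care, is the globalization: Lemma \ref{Lemma1} only produces a local "good" open set inside the closure of a given open set, whereas the definition of $\mathcal{I}^{\infty}_{L,nd}(\Omega)$ demands a single closed nowhere dense $\Gamma$ working for all of $\Omega$ at once. Patching the local data together — ensuring the "bad" leftover set is genuinely nowhere dense and closed, and that the index $\nu$ can be chosen locally uniformly (which is why one works with the neighborhood-form (\ref{IndkEqDefI}) rather than the pointwise form) — is the heart of the proof. A secondary technical point is the reduction from the directed index set $\Lambda$ to $\mathbb{N}$: this is exactly where countable co-finality is indispensable, since without it the asymptotic vanishing condition over $\Lambda$ cannot be tested along a sequence and Lemma \ref{Lemma1}, which is stated for $\mathbb{N}$-indexed sequences, does not apply.
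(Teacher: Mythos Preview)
Your proposal is correct and follows essentially the same route as the paper: reduce to $L=\mathbb{N}$ via countable co-finality, work on the Baire space $\Omega\setminus\Sigma$, and use Lemma~\ref{Lemma1} to show that the open set of points possessing a ``good'' neighborhood (one on which a tail of the sequence vanishes identically) is dense, so its complement is the required closed nowhere dense $\Gamma$.

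The one place you overcomplicate matters is the globalization step you flag as ``the main obstacle'': no Zorn or exhaustion is needed, and the decomposition $\Sigma=\bigcup_k\Sigma_k$ is never used.  One simply lets $G$ be the union of \emph{all} open $V\subseteq\Omega$ on which some tail $w_\mu$, $\mu\geq\lambda$, vanishes; $G$ is open by definition, and it is dense because for any nonvoid open $A\subseteq\Omega$ Lemma~\ref{Lemma1} (applied in the Baire space $\Omega\setminus\Sigma$ to $A\cap(\Omega\setminus\Sigma)$) produces a nonvoid relatively open piece on which a tail vanishes, which by continuity and the density of $\Omega\setminus\Sigma$ upgrades to a genuine open subset of $A$ lying in $G$.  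Then $\Gamma=\Omega\setminus G$ witnesses (\ref{IndkEqDefI}) directly.  This is exactly the paper's argument, stated tersely there as ``apply Lemma~\ref{Lemma1} \ldots\ it follows from the denseness of $\Omega\setminus\Sigma$ that $w$ satisfies (\ref{IndkEqDefI}).''
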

\begin{proof}
We only give a proof in case $L=\mathbb{N}$.  The extension to
general countably co-finite index sets is obvious.\\
Since we already have the inclusion (\ref{InftyIBIInclusion}), it
is sufficient to show that
\begin{eqnarray}
\mathcal{J}_{\mathbb{N},B-I}\left( \Omega\right)\subseteq
\mathcal{I}^{\infty}_{\mathbb{N},nd}\left(\Omega\right).\label{JBIINDInc}
\end{eqnarray}
In this regard, consider any $w=\left(w_{\lambda}\right) \in
\mathcal{J}_{\mathbb{N},B-I}\left(\Omega\right)$.  Let
$\Sigma\subseteq \Omega$ be the set of first Baire category
associated with $w$ through (\ref{BIIdeal}).  Since $\Omega$ is a
Baire space, and $\Sigma$ is of first Baire category, it follows
that $E=\Omega \setminus\Sigma$ is also a Baire space.
Consequently, we may apply Lemma \ref{Lemma1} to the sequence
$\left(w_{n}\right)$, with the limiting function $w$ identically
$0$, to obtain the following:
\begin{eqnarray}
\begin{array}{ll}
\forall & x\in\Omega\setminus\Sigma~: \\
\exists & V\ni x~\mbox{ an open set in }\Omega,~\lambda\in\mathbb{N}~: \\
\forall & y\in V\cap \left(\Omega\setminus\Sigma\right),~ \mu\in\mathbb{N},~\mu\geq\lambda~: \\
& w_{\mu}(y)=0 \\
\end{array}\label{T1E1}
\end{eqnarray}
Since $\Omega\setminus\Sigma$ is dense in $\Omega$, by virtue of
$\Sigma$ being of a set of first Baire category, it follows from
(\ref{T1E1}) and the continuity of each $w_{\lambda}$ that
\begin{eqnarray}
\begin{array}{ll}
\forall & y\in V~: \\
\forall & \mu\in\mathbb{N},~\mu\geq \lambda~: \\
& w_{\mu}(y)=0 \\
\end{array}\nonumber
\end{eqnarray}
It again follows from the denseness of $\Omega\setminus\Sigma$
that the sequence $w$ satisfies (\ref{IndkEqDefI}).  Therefore
$w\in \mathcal{I}^{\infty}_{\mathbb{N},nd}\left(\Omega\right)$,
which verifies (\ref{JBIINDInc}).  This completes the proof.
\end{proof}
\begin{cor}\label{COR}
If $L$ is countably co-final, then $\mathcal{B}_{L,B-I}\left(
\Omega\right) = \mathcal{A}^{\infty}_{L,nd}\left( \Omega\right)$.
\end{cor}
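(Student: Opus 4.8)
The plan is to deduce the corollary directly from Theorem \ref{AndFBCEq} by the routine passage from equality of ideals to equality of quotient algebras. Recall that both algebras in question are quotients of the one fixed ambient algebra $\mathcal{C}^{\infty}\left(\Omega\right)^{\Lambda}$: by definition $\mathcal{A}^{\infty}_{L,nd}\left(\Omega\right) = \mathcal{C}^{\infty}\left(\Omega\right)^{\Lambda}/\mathcal{I}^{\infty}_{L,nd}\left(\Omega\right)$, while $\mathcal{B}_{L,B-I}\left(\Omega\right) = \mathcal{B}_{L,\mathcal{S}_{B-I}}\left(\Omega\right) = \mathcal{C}^{\infty}\left(\Omega\right)^{\Lambda}/\mathcal{J}_{L,B-I}\left(\Omega\right)$ in accordance with (\ref{STFAlgDef}). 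Hence the first step is simply to invoke Theorem \ref{AndFBCEq}, which under the countable co-finality hypothesis on $L$ asserts $\mathcal{J}_{L,B-I}\left(\Omega\right) = \mathcal{I}^{\infty}_{L,nd}\left(\Omega\right)$, so that the two quotients are taken with respect to one and the same ideal in $\mathcal{C}^{\infty}\left(\Omega\right)^{\Lambda}$.

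Second, I would make the identification explicit through the canonical surjective algebra homomorphism (\ref{AInftyBIHom}). This map is precisely the one induced on quotients by the inclusion $\mathcal{I}^{\infty}_{L,nd}\left(\Omega\right) \subseteq \mathcal{J}_{L,B-I}\left(\Omega\right)$ of ideals, namely $w + \mathcal{I}^{\infty}_{L,nd}\left(\Omega\right) \mapsto w + \mathcal{J}_{L,B-I}\left(\Omega\right)$. Its kernel is $\mathcal{J}_{L,B-I}\left(\Omega\right)/\mathcal{I}^{\infty}_{L,nd}\left(\Omega\right)$, which is trivial by the equality of ideals supplied in the first step. Thus the homomorphism (\ref{AInftyBIHom}) is injective as well as surjective, hence an algebra isomorphism; since it acts as the identity on representatives, we obtain $\mathcal{B}_{L,B-I}\left(\Omega\right) = \mathcal{A}^{\infty}_{L,nd}\left(\Omega\right)$ as claimed.

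Finally, to record that this identification also respects the differential structure, it suffices to note that the partial derivative operators $D^{p}$ on both quotient algebras are, by (\ref{DpAndkDef}) and the corresponding construction for the STF algebras, the maps induced by one and the same componentwise operators $D^{p}$ on $\mathcal{C}^{\infty}\left(\Omega\right)^{\Lambda}$; consequently the isomorphism automatically commutes with every $D^{p}$, $p\in\mathbb{N}^{n}$, so the two are isomorphic as differential algebras. There is no genuine obstacle in this corollary: its entire substance has already been absorbed by Theorem \ref{AndFBCEq}, and what remains is only the bookkeeping of the two quotient presentations.
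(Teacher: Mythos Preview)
Your proposal is correct and follows exactly the intended route: the paper states the corollary without proof, treating it as immediate from Theorem~\ref{AndFBCEq}, and your argument spells out precisely this passage from equality of the ideals $\mathcal{J}_{L,B-I}\left(\Omega\right)=\mathcal{I}^{\infty}_{L,nd}\left(\Omega\right)$ to equality of the quotient algebras. Your additional remarks on the kernel of the homomorphism (\ref{AInftyBIHom}) and on compatibility with the $D^{p}$ are correct but more than the paper itself records.
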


Theorem \ref{AndFBCEq} and Corollary \ref{COR} may be interpreted
as follows: Within the setting of STF algebras of generalized
functions, a singularity that occurs on a set of first Baire
category will inevitably collapse to a, possibly smaller, closed
nowhere dense set.  Thus, closed nowhere dense singularity sets
are maximal within all singularity sets of first Baire category.

However, one of the main reasons for the use of STF algebras is
their ability to handle singularities on large, in particular
dense sets. Therefore Corollary \ref{COR} raises two important
questions: Firstly, are there convenient STF algebras that can in
fact handle singularities, even on dense sets?  Secondly, what are
the possible limitations on the size of singularity sets that the
STF algebras can handle?  In the following two sections we address
these two issues.  The former of these two questions is answered
in full in Section 2.6.  The latter is more subtle, and we give
only a brief discussion of the issues involved.

\subsection{Limitations on the Size of Singularity Sets}

It is a rather straight forward matter to construct STF algebras
that can handle singularities on large singularity sets, provided
that the family of admissible singularity sets is small.  In
particular, one may chose the family $\mathcal{S}$ in the
construction of STF algebras presented in Section 2.2 to consist
of a single set $\Sigma\subset\Omega$ whose complement
$\Omega\setminus\Sigma$ is countable and dense.  In this case the
singularity set of a generalized functions in
$\mathcal{B}_{L,\mathcal{S}}\left(\Omega \right)$ has larger
cardinality than the set of singular points.  In particular, the
singularity set $\Sigma$ is uncountable, while the set of singular
points $\Omega\setminus\Sigma$ is countable.  However, the algebra
$\mathcal{B}_{L,\mathcal{S}}\left(\Omega\right)$ can only handle
singularities that occur on one single set, namely, the set
$\Sigma$.  Hence this algebra is rather limited.

On the other hand, the use of large, and in particular infinite,
families of singularity sets appears to place some limitations on
the possible size of the singularity sets involved. In this
regard, consider any family of singularity sets $\mathcal{S}$ that
contains $\mathcal{S}_{B-I}\left(\Omega\right)$.  Each singularity
set $\Sigma\in\mathcal{S}$ satisfies the following smallness
condition:
\begin{eqnarray}
\begin{array}{ll}
\forall & U\subseteq \Omega~{\rm open}~: \\
& U\cap\Sigma~\mbox{is not residual in }U \\
\end{array}\nonumber
\end{eqnarray}
Indeed, if $\Sigma\cap U$ is residual in $U$ for some $U\subseteq
\Omega$, then $\Sigma'=U\setminus\left(U\cap \Sigma\right)$ is of
first Baire category in $\Omega$ so that $\Sigma'\in\mathcal{S}$.
Then, owing to (\ref{UnionProp}), there is some $\Sigma''\in
\mathcal{S}$ such that
\begin{eqnarray}
\Sigma\cup\Sigma'\subseteq \Sigma''.\nonumber
\end{eqnarray}
However, $U\subseteq \Sigma\cup\Sigma'$ so that $\Sigma''$ does
not have dense complement in $\Omega$, contrary to
(\ref{DenseComplement}).

In this way, we come to realize that singularity sets which
contain as particular cases the sets of first Baire category
cannot be large, even locally, in the topological sense of being
residual.  However, we should note that such sets, may still be
rather larger with respect to Lebegsgue measure.  Indeed, a closed
nowhere dense set in $\mathbb{R}^{n}$ may have arbitrary large
positive Lebesgue measure, see \cite{Oxtoby}.

\subsection{STF Algebras with Dense Singularities}

As mentioned, a key feature of the SFT algebras is the ability of
such algebras to deal with large classes of singularities of
generalized functions.  In particular, and in connection with so
called space time foam structures in general relativity, see
\cite{Mallios and Rosinger}, where the set of singular points in
the space-time manifold is dense, there is an interest in
singularities that may occur on dense sets.

The STF algebra $\mathcal{B}_{L,B-I}\left(\Omega\right)$ may, at
first, appear to provide precisely such an algebra with dense
singularity.  Indeed, the typical singularity set associated with
a generalized function in
$\mathcal{B}_{L,B-I}\left(\Omega\right)$, which is of first Baire
category, is dense in $\Omega$.  However, as shown in Section 2.4,
singularities that occur on sets of first Baire category
inevitably collapse to closed nowhere dense sets, so that
$\mathcal{A}^{\infty}_{L,nd}\left(\Omega\right)$ appears to have a
maximal position among those algebras that can handle at least all
singularities on closed nowhere dense sets.

In this section we propose an alternative
$\mathcal{B}_{L,M_{0}}\left( \Omega\right)$ to the algebra
$\mathcal{B}_{L,B-I}\left(\Omega\right)$, with the following two
features:
\begin{description}
    \item[(a)]Generalized functions in $\mathcal{B}_{L,M_{0}}\left(
    \Omega\right)$ may exhibit singularities on dense sets.
    \item[(b)] The algebra $\mathcal{B}_{L,M_{0}}\left(
    \Omega\right)$ is convenient, form the point of view of
    existence of generalized solutions of large classes of PDEs.
\end{description}
\begin{Exa}
Let $\mathcal{S}_{L,M_{0}}\left(\Omega\right)$ denote the
collection of all subsets of $\Omega$ with zero Lebesgue measure.
Recall that sets of measure $0$ may be dense in $\Omega$.  Indeed,
the set $\mathbb{Q}$ of rational numbers is dense in $\mathbb{R}$,
but being the union of countable many singleton sets, it also has
measure $0$.  Furthermore, sets of measure $0$ may not be of first
Baire category \cite{Oxtoby}.  Hence the generalized functions in
the STF algebra $\mathcal{B}_{L,M_{0}}\left(\Omega\right) =
\mathcal{C}^{\infty}\left(\Omega\right)^{\Lambda}/ \mathcal{J}_{L
,\mathcal{S}_{M_{0}}}\left(\Omega\right)$ can have singularities
on dense sets.  Furthermore, since the singularity set may not be
of first Baire category, Theorem \ref{AndFBCEq} does not apply, so
that the singularity may not collapse to a closed nowhere dense
set.

It is easy to show that the algebra $\mathcal{B}_{L,M_{0}}\left(
\Omega\right)$ admits a global version of the Cauchy-Kovalevskaia
Theorem \cite{Rosinger 14}.  Indeed, one may construct generalized
solutions of arbitrary analytic nonlinear PDEs in
$\mathcal{A}^{\infty}_{L,nd}\left(\Omega\right)$, which are
analytic everywhere except on a closed nowhere dense set.
Furthermore, the closed nowhere dense singularity set may be
chosen to have zero Lebesgue measure.  It is not hard to show that
this generalized solution belongs to $\mathcal{B}_{L,M_{0}}\left(
\Omega\right)$.
\end{Exa}

\section{Concluding Remarks}

STF algebras of generalized functions were introduced as a
convenient and natural setting in which one may deal with large
classes of singularities.  In particular, and motivated by
space-time foam structures in general relativity, algebras with
dense singularity sets were introduced, notably the algebra
$\mathcal{B}_{L,B-I}\left( \Omega\right)$.  In this paper, we
showed that $\mathcal{B}_{L,B-I}\left( \Omega\right)$ coincides
with the earlier closed nowhere dense algebra
$\mathcal{A}^{\infty}_{L,nd} \left( \Omega\right)$, which admit
only singularities that occur on closed nowhere dense sets.  From
the point of view of generalized solutions of nonlinear PDEs, this
is of great interest, since it shows that singularities in the
solutions of a PDE that occur on large sets, namely, dense sets of
first category, inevitable collapse to closed nowhere dense sets.

On the other hand, from the point of view of foam structures in
general relativity, and other highly singular phenomena, it
necessary to deal with singularities that occur on dense sets.  In
this regard, we constructed an algebra
$\mathcal{B}_{L,M_{0}}\left( \Omega\right)$ that can handle such
singularities.  Furthermore, this algebra is easily seen to be
convenient, from the point of view of solutions of large classes
of PDEs.


\begin{thebibliography}{99}

\bibitem{Adamczewski}
M. Adamczewski, J. F. Colombeau, A. Y. Le Roux, Convergence of
numerical schemes involving powers of the Dirac delta function, J.
Math. Anal. Appl. {\bf 145} (1990) 172--185.

\bibitem{Adams} \textsc{R A Adams }: \ Sobolev spaces, Pure and
Applied Mathematics, \textbf{65}, Academic Press, 1975.



\bibitem{Bernard et al}
S. Bernard, J. F. Colombeau, A. Meril, L. Remaki, Conservation
laws with discontinuous coefficients, J. Math. Anal. Appl. {\bf
258} (2001) 63--86.


\bibitem{ColombeauI}
J. F. Colombeau, New generalized functions and multiplication of
distributions, Noth Holland Mathematics Studies \textbf{84}, 1984.

\bibitem{ColombeauII} J. F. Colombeau, Elementary
introduction to new generalized functions, North Holland
Mathematics Studies \textbf{113}, 1985.

\bibitem{Colombeau and Heibig}
J. F. Colombeau, A. Heibig, Generalized solutions to Cauchy
problems, Monatsh. Math. {\bf 117} (1994) 33--49.

\bibitem{Colombeau et al}
J. F. Colombeau, A. Heibig, M. Oberguggenberger, Generalized
solutions to partial differential equations of evolution type,
Acta Appl. Math.{\bf 45} (1996) 115--142.

\bibitem{Grosser et al I}
M. Grosser, M. Kunzinger, M. B. Obergugenberger, R. Steinbauer and
J. Vickers, Geometric theory of generalized functions with
applications to general relativety, Kluwer Academic Publishers,
Dordrecht, 2002.


\bibitem{Mallios and Rosinger}
A. Mallios and E. E. Rosinger, Space-time foam dense singularities
and de Rham cohomology, Acta Appl. Math. {\bf 67} (2001) 59--89.

\bibitem{Obergugenberger}
M. B. Obergugenberger, Multiplication of distributions and
applications to partial differential equations, Pitman, 1992.


\bibitem{Oxtoby}
J. C. Oxtoby, Meaure and category, 2nd Edition, Springer-Verlag,
1980.

\bibitem{Rosinger 4} E. E. Rosinger, Embedding of the $\mathcal{D}'$ distributions into pseudotopological
algebras, Stud. Cerc. Mat. \textbf{18} (1966).

\bibitem{Rosinger 5} E. E. Rosinger, Pseudotopological
spaces:  The embedding of the $\mathcal{D}'$ distributions into
algebras, Stud. Cerc. Mat. \textbf{20} (1968).

\bibitem{Rosinger 6} E. E. Rosinger, Distributions and
nonlinear partial differential equations, Springer Lecture Notes
in Mathematics \textbf{684}, 1978.

\bibitem{Rosinger 7} E. E. Rosinger, Nonlinear partial
differential equations, sequential and weak solutions, North
Holland Mathematics Studies \textbf{44}, 1980.

\bibitem{Rosinger 8} E. E. Rosinger, Generalized
solutions of nonlinear partial differential equations, North
Holland Mathematics Studies, \textbf{146}, 1987.

\bibitem{Rosinger 9} E. E. Rosinger, Nonlinear partial differential equations, an algebraic view of generalized
solutions, North Holland Mathematics Studies, \textbf{164}, 1990.

\bibitem{Rosinger 12} E. E. Rosinger, How to solve smooth nonlinear PDEs in algebras of generalized functions with dense
singularities, Appl. Anal. {\bf 78} (2001) 355 -- 378.

\bibitem{Rosinger 13} E. E. Rosinger, Differential algebras with
dense singularities on manifolds, Acta Appl. Math. {\bf 95} (2007)
233--256.

\bibitem{Rosinger 14} E. E. Rosinger, Space-time foam differential
algebras of generalized functions and a Global Cauchy-Kovalevskaia
theorem, to appear in Acta Appl. Math.


\bibitem{Schwarz 1}
L. Schwarz, Sur l'impossibilite de la multiplications des
distributions, C. R. Acad. Sci. Paris \textbf{239} (1954)
847--848.

\bibitem{vdWalt} J. H. van der Walt, On notions of regularity in generalized function algebras,
To Appear.



\bibitem{Vernaeve}
H. Vernaeve, Embedding distributions in algebras of generalized
functions with singularities, Monatsh. Math. {\bf 138} (2003)
307--318.


\end{thebibliography}
\end{document}